\documentclass{article}
\usepackage[affil-it]{authblk}
\usepackage[utf8]{inputenc}
\usepackage{amsmath, amsfonts, amsthm, amssymb, latexsym}
\usepackage{geometry}
\usepackage{enumerate}
\usepackage{multirow}
\usepackage{rotating}
\usepackage{graphicx,color}
\usepackage{float}
\usepackage{mathrsfs}

\newtheorem{thm}{Theorem}[section]
\newtheorem{lemma}[thm]{Lemma}

\newtheorem{define}[thm]{Definition}
\newtheorem{prop}[thm]{Proposition}
\newtheorem{cor}[thm]{Corollary}

\newtheorem*{remark}{Remark}

\title{The Calder\'on Problem For The Fractional Magnetic Operator}
\author{Li Li}
\affil{Department of Mathematics, University of Washington\\
Seattle, WA 98195, USA}
\date{}

\begin{document}

\maketitle

\noindent \textbf{ABSTRACT.}\, We introduce the fractional magnetic operator involving a magnetic potential and an electric potential. We formulate an inverse problem for the fractional magnetic operator. We determine the electric potential from the exterior partial measurements of the associated Dirichlet-to-Neumann map by using Runge approximation property.

\section{Introduction}

The study of the Calder\'on problem for local operators dates back to 1980s. We refer readers to \cite{uhlmann2014inverse} for a report on the progress made in this area.
As a variation of the classical Calder\'on problem,
the Calder\'on problem for the magnetic Schr\"odinger operator 
$$(-i\nabla+ A(x))^2+ q(x)$$
where $A(x)$ is a vector-valued magnetic potential and $q(x)$ is an electric potential has been extensively studied in the past decades. See for instance, \cite{ferreira2007determining, krupchyk2014uniqueness, nakamura1995global, sun1993inverse}.
In those articles, the authors considered the Dirichlet problem
$$(-i\nabla+ A)^2u+ qu= 0\,\,\, \text{in}\,\,\Omega,\qquad
u|_{\partial \Omega}= f$$
where $\Omega$ is a bounded domain with smooth boundary. They determined both $A$ (up to a gauge equivalence) and $q$ from the knowledge of the associated Dirichlet-to-Neumann map (DN map) 
$$\Lambda_{A, q}: f\to (\partial_\nu+ iA\cdot \nu)u_f|_{\partial \Omega}$$ 
where $u_f$ is the unique solution of the Dirichlet problem and $\nu$ is
the unit outer normal on $\partial \Omega$.

In recent years, the study of the Calder\'on problem for fractional operators has also been an active research field in mathematics. This study is motivated by problems involving anomalous diffusion and random processes with jumps in probability theory. The fractional Calder\'on problem was first introduced in \cite{ghosh2016calder} where the inverse problem for the fractional operator 
$$(-\Delta)^s+ q\qquad (0< s< 1)$$ was studied. In \cite{ghosh2016calder}, the authors considered the
exterior Dirichlet problem
$$((-\Delta)^s+ q)u= 0\,\,\, \text{in}\,\,\Omega,\qquad
u|_{\Omega_e}= f$$
where $\Omega_e:= \mathbb{R}^n\setminus\bar{\Omega}$. By using the strong uniqueness property of $(-\Delta)^s$, the authors proved the Runge approximation property of $(-\Delta)^s+ q$ and the fundamental uniqueness
result: The potential $q$ in $\Omega$ can be determined from the exterior partial measurements
of the DN map 
$$\Lambda_q: f\to (-\Delta)^su_f|_{\Omega_e}.$$
This result has been generalized. In \cite{ghosh2017calderon}, the authors considered the fractional elliptic operator
$$\mathcal{L}^s:= (-\nabla\cdot (M(x)\nabla))^s$$
where $M$ is a smooth, real symmetric matrix-valued function satisfying the uniformly elliptic condition. They formulated the Calder\'on problem for $\mathcal{L}^s+ q$ and proved the corresponding uniqueness theorem. Also see 
\cite{bhattacharyya2018inverse, cekic2018calder, covi2018inverse, ghosh2018uniqueness, ruland2019fractional}
for more results related with the fractional Calder\'on problem.

In this paper, we study the Calder\'on problem for the fractional magnetic operator $\mathcal{L}^s_A+ q$. Our operator $\mathcal{L}^s_A$ is formally defined by 
\begin{equation}\label{LsA}
\mathcal{L}^s_Au(x):= 2\lim_{\epsilon\to 0^+}
\int_{\mathbb{R}^n\setminus B_\epsilon(x)}(u(x)-e^{i(x-y)\cdot A(\frac{x+y}{2})}u(y))K(x,y)\,dy
\end{equation}
where $K(x, y)$ is a function associated with the heat kernel 
$p_t(x, y)$ defined in Subsection 2.2.
We will see $\mathcal{L}^s_A+ q$ actually generalizes $\mathcal{L}^s+ q$ later. Besides, 
$\mathcal{L}^s_A$ generalizes of the fractional magnetic Laplacian
$$(-\Delta)^s_A u(x):= c_{n,s}\lim_{\epsilon\to 0^+}
\int_{\mathbb{R}^n\setminus B_\epsilon(x)}\frac{u(x)-e^{i(x-y)\cdot A(\frac{x+y}{2})}u(y)}{|x-y|^{n+2s}}\,dy$$
introduced in \cite{d2018ground}. It was proved in \cite{squassina2016bourgain} that $(-\Delta)^s_A$ converges to the magnetic 
Laplacian $(\nabla- iA(x))^2$ as $s\to 1^-$
in an appropriate sense. Hence our problem can be viewed as a generalization of the Calder\'on problem studied in \cite{ghosh2017calderon} as well as a nonlocal analogue of the Calder\'on problem for the magnetic Schr\"odinger operator.

\begin{remark}
$(-\Delta)^s_A$ has the form of a Weyl pseudo-differential operator. In fact, we consider
$$u_x: y\to e^{i(x-y)\cdot A(\frac{x+y}{2})}u(y).$$
for each fixed $x$. Since we have the equivalent singular integral and Fourier transform definition of $(-\Delta)^s$ (see for instance, \cite{kwasnicki2017ten})
$$(-\Delta)^su(x)= c_{n,s}\lim_{\epsilon\to 0^+}
\int_{\mathbb{R}^n\setminus B_\epsilon(x)}\frac{u(x)-u(y)}{|x-y|^{n+2s}}\,dy
= \mathcal{F}^{-1}(|\xi|^{2s}\mathcal{F}u(\xi))(x)$$
then by considering the value of $(-\Delta)^su_x$ at $x$, we can formally do the computation
$$(-\Delta)^s_A u(x)= (-\Delta)^s u_x (x)= (2\pi)^{-n}\int e^{ix\cdot \xi}|\xi|^{2s}\mathcal{F}u_x(\xi)\,d\xi$$
$$=  (2\pi)^{-n}\iint e^{i(x-y)\cdot (\xi+ A(\frac{x+y}{2}))}|\xi|^{2s}u(y)\,dyd\xi
= (2\pi)^{-n}\iint e^{i(x-y)\cdot \xi}|\xi- A(\frac{x+y}{2})|^{2s}u(y)\,dyd\xi.$$
In particular, when $s= \frac{1}{2}$, the symbol $|\xi- A(x)|$ corresponds to the classical relativistic Hamiltonian for a spinless particle of zero mass under the influence of the magnetic potential $A(x)$
and $(-\Delta)^{1/2}_A$ is one of the quantized kinetic energy operators.
See for instance, \cite{ichinose2013magnetic, nagase1990weyl}.
\end{remark}

To formulate the Calder\'on problem for $\mathcal{L}^s_A+ q$, we assume $A\in L^\infty(\mathbb{R}^n)$ and $q$ is regular for $\mathcal{L}^s_A$, i.e. the associated sesqulinear form $B_{A, q}$ is coercive on $\tilde{H}^s(\Omega)\times \tilde{H}^s(\Omega)$ where $\tilde{H}^s(\Omega)$ is the closure of $C^\infty_c(\Omega)$ in the Sobolev space $H^s(\mathbb{R}^n)$ to ensure that
the exterior Dirichlet problem
$$(\mathcal{L}^s_A+ q)u= 0\,\,\, \text{in}\,\,\Omega,\qquad
u|_{\Omega_e}= g$$
has a unique solution $u_g\in H^s(\mathbb{R}^n)$ for each $g\in H^s(\mathbb{R}^n)$ and the solution operator $P_{A, q}: g\to u_g$ is bounded on $H^s(\mathbb{R}^n)$. Then we can introduce the DN map $\Lambda_{A, q}: H^s(\Omega_e)\to H^s(\Omega_e)^*$, which is formally defined by
$$\Lambda_{A, q}g:= \mathcal{L}^s_Au_g|_{\Omega_e}.$$

The following theorem is the main result in this paper.
\begin{thm}
Suppose $A\in L^\infty(\mathbb{R}^n)$ and $q_j\in L^\infty(\Omega)$ are regular for $\mathcal{L}^s_A$, 
$\Omega\cup \mathrm{supp}\,A\subset B_r(0)$ for some $r> 0$,
$W_j$ are open sets s.t. $W_j\subset \Omega_e$ and $W_j\setminus \overline{B_{3r}}(0)\neq \emptyset$ ($j= 1,2$). If 
\begin{equation}\label{IDN}
\Lambda_{A, q_1}g|_{W_2}= \Lambda_{A, q_2}g|_{W_2}
\end{equation}
for any $g\in C^\infty_c(W_1)$, then $q_1= q_2$.
\end{thm}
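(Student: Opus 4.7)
The plan is to follow the standard Alessandrini-type strategy, combining an integral identity with the Runge approximation property for $\mathcal{L}^s_A + q$ (presumed established earlier in the paper).

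\textbf{Integral identity.} For $g_j \in C^\infty_c(W_j)$ I would let $u_j = P_{A,q_j} g_j \in H^s(\mathbb{R}^n)$ be the unique solution of $(\mathcal{L}^s_A + q_j) u_j = 0$ in $\Omega$ with $u_j|_{\Omega_e}= g_j$. Denote by $B_{A,q_j}(u,v)$ the associated sesquilinear form. The key observation is that the magnetic part of $B_{A,q_j}$ is independent of $j$ and is Hermitian, since the phase factor satisfies $e^{i(y-x)\cdot A(\frac{x+y}{2})} = \overline{e^{i(x-y)\cdot A(\frac{x+y}{2})}}$ and the kernel $K(x,y)$ is symmetric. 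Testing $\Lambda_{A,q_1} g_1 - \Lambda_{A,q_2} g_1$ against $u_2$ (which extends $g_2$ to $\mathbb{R}^n$) and subtracting the two weak formulations would give the Alessandrini-type identity
\begin{equation*}
\langle (\Lambda_{A,q_1} - \Lambda_{A,q_2}) g_1,\, g_2 \rangle \;=\; \int_{\Omega} (q_1 - q_2)\, u_1\, \overline{u_2}\, dx.
\end{equation*}
Since $\mathrm{supp}\, g_2 \subset W_2$ and $g_1 \in C^\infty_c(W_1)$, the hypothesis \eqref{IDN} forces the left-hand side to vanish for all admissible $g_1, g_2$.

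\textbf{Runge approximation and conclusion.} Next I would invoke the Runge approximation property for $\mathcal{L}^s_A + q_j$ in $W_j$: the sets $\{P_{A,q_j} g|_{\Omega} : g \in C^\infty_c(W_j)\}$ are dense in $L^2(\Omega)$. The hypothesis $W_j \setminus \overline{B_{3r}}(0) \neq \emptyset$ plays its role here: it provides an open piece of $W_j$ well separated from $\mathrm{supp}\, A \cup \overline{\Omega} \subset B_r$, on which $\mathcal{L}^s_A$ collapses to the nonmagnetic $\mathcal{L}^s$, so that the strong uniqueness of the fractional operator combined with Hahn–Banach yields the needed density. Choosing sequences $g_j^{(k)} \in C^\infty_c(W_j)$ with $u_j^{(k)}|_\Omega \to f_j$ in $L^2(\Omega)$ for arbitrary $f_1, f_2 \in L^2(\Omega)$ and passing to the limit in the integral identity yields $\int_\Omega (q_1 - q_2) f_1 \overline{f_2}\, dx = 0$ for all such $f_1,f_2$. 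Choosing $f_2 \equiv 1$ and $f_1$ arbitrary then forces $q_1 = q_2$ in $\Omega$.

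\textbf{Main obstacle.} Within this theorem, the main delicate step is the careful derivation of the integral identity: because $\mathcal{L}^s_A$ is defined through the singular integral \eqref{LsA} with a complex phase and the kernel $K(x,y)$ coming from the heat semigroup, one must justify the exchange of limits, confirm the symmetry $B_A(u,v)=\overline{B_A(v,u)}$ of the magnetic part, and verify that the full difference $B_{A,q_1}-B_{A,q_2}$ reduces to the $L^2$ pairing against $q_1-q_2$. The heavier lifting, however, sits in the Runge approximation property itself, which must be proved separately; it relies on strong uniqueness for $\mathcal{L}^s_A$, a property that is not immediate because $\mathcal{L}^s_A$ is not amenable to the Caffarelli–Silvestre extension as directly as $(-\Delta)^s$, and this is precisely why the hypothesis on $W_j$ is required so that one only ever needs strong uniqueness on the region where the operator is nonmagnetic.
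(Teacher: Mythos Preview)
Your proposal is correct and follows essentially the same route as the paper: an Alessandrini-type integral identity combined with the Runge approximation property (Proposition~5.4), then approximating $u_1|_\Omega$ by an arbitrary $f\in L^2(\Omega)$ and $u_2|_\Omega$ by the constant~$1$.

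The one noteworthy difference is in how the integral identity is obtained. You argue via the Hermitian symmetry $B_A(u,v)=\overline{B_A(v,u)}$ of the sesquilinear magnetic form, which yields
\[
\langle(\Lambda_{A,q_1}-\Lambda_{A,q_2})g_1,g_2\rangle=\int_\Omega(q_1-q_2)\,u_1\,\overline{u_2},\qquad u_j=P_{A,q_j}g_j.
\]
The paper instead uses the \emph{bilinear} relation $\langle\mathcal{L}^s_Au,v\rangle=\langle\mathcal{L}^s_{-A}v,u\rangle$ (equation~(\ref{ns})), which gives $\langle\Lambda_{A,q}g,h\rangle=\langle\Lambda_{-A,q}h,g\rangle$ and hence the identity
\[
\langle(\Lambda_{A,q_1}-\Lambda_{A,q_2})g_1,g_2\rangle=\int_\Omega(q_1-q_2)\,u_1^+\,u_2^-,\qquad u_2^-=P_{-A,q_2}g_2,
\]
without a complex conjugate; the Runge step is then applied to $P_{-A,q_2}$ rather than $P_{A,q_2}$. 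The two identities are equivalent through $\overline{P_{A,q}g}=P_{-A,q}\bar g$ (for real $q$), so this is a cosmetic distinction. Your Hermitian route is the more standard presentation; the paper's $-A$ route has the minor advantage of not invoking reality of $q$ explicitly.
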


\begin{remark}
Here we determine the electric potential $q$ from the knowledge of DN map for a fixed magnetic potential $A$. The question whether we can determine $A, q$ simultaneously from $\Lambda_{A, q}$ is still open. The assumption $W_j\setminus \overline{B_{3r}}(0)\neq \emptyset$ looks unnatural but it is essential when we show the Runge approximation property of $\mathcal{L}^s_A+ q$ based on the strong uniqueness property of $\mathcal{L}^s$ proved in \cite{ghosh2017calderon} (see Section 5 for more details).

A closely related but different work on the fractional Calder\'on problem can be found in \cite{covi2019inverse} where the fractional gradient
$\nabla^s: H^s(\mathbb{R}^n)\to L^2(\mathbb{R}^n\times \mathbb{R}^n)$ defined by
$$\nabla^s u(x, y):= c_{n,s}(u(x)- u(y))\frac{y-x}{|y-x|^{n/2+ s+ 1}}$$
was considered. Based on the identity
$$\langle (-\Delta)^s u, v\rangle= \langle\nabla^s u, \nabla^s v\rangle,$$
the author defined the operator $(-\Delta)^s_\mathcal{A}$ by
$$\langle (-\Delta)^s_\mathcal{A} u, v\rangle:= \langle(\nabla^s+ \mathcal{A}(x, y))u, (\nabla^s+ \mathcal{A}(x, y)) v\rangle$$
for a bivariate vector-valued function $\mathcal{A}(x, y)$ and then defined the DN map $\Lambda_{\mathcal{A}, q}$ associated with the fractional operator $(-\Delta)^s_\mathcal{A}+ q$. It was proved in \cite{covi2019inverse} that $(\mathcal{A}, q)$ (up to a gauge equivalence) can be determined from the exterior partial measurements of $\Lambda_{\mathcal{A}, q}$ under some appropriate assumptions on $\mathcal{A}$ and $q$.

We remark that both the operator $\mathcal{L}^s_A+ q$ here and the operator $(-\Delta)^s_\mathcal{A}+ q$ in \cite{covi2019inverse} have their own advantages. $\mathcal{L}^s_A+ q$ generalizes a broader class of fractional operators while the uniqueness theorem for $(-\Delta)^s_\mathcal{A}+ q$ is stronger (see Theorem 1.1 and Lemma 3.8 in \cite{covi2019inverse}).
\end{remark}

The rest of this paper is organized in the following way. In Section 2, we summarize the background knowledge. In Section 3, we give the rigorous definition of $\mathcal{L}^s_A$ in bilinear form. We rigorously define the exterior Dirichlet problem and the DN map associated with $\mathcal{L}^s_A+ q$ in Section 4, prove the Runge approximation property of $\mathcal{L}^s_A+ q$ and the main theorem in Section 5.~\\

\noindent \textbf{Acknowledgements.} The author is partly supported by
National Science Foundation and would like to thank Professor Gunther Uhlmann for suggesting the problem and for helpful discussions. The author also would like to thank the unknown referee for pointing out that the conditions assumed in Theorem 1.1 and Lemma 3.3 in the original version of this paper can be weakened.

\section{Preliminaries}

Throughout this paper
\begin {itemize}
\item $n\geq 2$ denotes the space dimension and
$0< s< 1$ denotes the fractional power

\item $\Omega$ denotes a bounded Lipschitz domain and
$\Omega_e:= \mathbb{R}^n\setminus\bar{\Omega}$

\item $B_r(0)$ denotes the open ball centered at the origin with radius $r> 0$ and $\overline{B_{r}}(0)$ denotes the closure of $B_r(0)$

\item $A: \mathbb{R}^n\to \mathbb{R}^n$ denotes a real vector-valued magnetic potential

\item $q$ defined on $\Omega$ denotes an electric potential

\item $c, C, C', C_1,\cdots$ denote positive constants (which may depend on some parameters)

\item $\int\cdots\int= \int_{\mathbb{R}^n}\cdots\int_{\mathbb{R}^n}$

\item $X^*$ denotes the continuous dual space of $X$ and write
$\langle f, u\rangle= f(u)$ for $u\in X,\,f\in X^*$

\item $\mathcal{S}'(\mathbb{R}^n)$ denotes the space of temperate distributions.
\end{itemize}

\subsection{Fourier transform and Sobolev spaces}
Our notations for the Fourier transform and Sobolev spaces are
$$\mathcal{F}u(\xi)= \hat{u}(\xi):= \int e^{-ix\cdot \xi}u(x)\,dx,\qquad
H^t(\mathbb{R}^n):= \{u\in \mathcal{S}'(\mathbb{R}^n):
\int (1+\vert \xi\vert^2)^t\vert \hat{u}(\xi)\vert^2d\xi<\infty\}$$
where $t\in \mathbb{R}$. For $0< s< 1$,
one of the equivalent forms of the norm $||\cdot||_{H^s}$ is
$$||u||_{H^s}:= (||u||^2_{L^2}+ \iint\frac{|u(x)-u(y)|^2}{|x-y|^{n+2s}}\,dxdy)^{1/2}.$$
We will use the natural identification
$H^{-s}(\mathbb{R}^n)= H^s(\mathbb{R}^n)^*$.

Given an open set $U$ and a closed set $F$ in $\mathbb{R}^n$, 
$$H^t(U):= \{u|_U: u\in H^t(\mathbb{R}^n)\},\qquad 
H^t_F(\mathbb{R}^n):= \{u\in H^t(\mathbb{R}^n): \mathrm{supp}\,u\subset F\},$$
$$\tilde{H}^t(U):= 
\mathrm{the\,\,closure\,\,of}\,\, C^\infty_c(U)\,\,\mathrm{in}\,\, H^t(\mathbb{R}^n).$$
Since $\Omega$ is Lipschitz bounded, then 
$\tilde{H}^s(\Omega)= H^s_{\bar{\Omega}}(\mathbb{R}^n)$
and we have the natural identification
$$H^s(\mathbb{R}^n)/\tilde{H}^s(\Omega)= H^s(\Omega_e).$$

\subsection{Spectral theory and heat kernels}
We will not use any spectral theory in later sections. Our goal here is to provide some background knowledge of $K(x, y)$, which appears in (\ref{LsA}).

For a fixed smooth real symmetric matrix-valued function $M(x)= (a_{i,j}(x))$ satisfying the uniformly elliptic condition, i.e.
$$C^{-1}_M|\xi|^2\leq \sum_{1\leq i,j\leq n}a_{i, j}(x)\xi_i\xi_j\leq C_M|\xi|^2,\qquad x, \xi\in \mathbb{R}^n,$$
$\mathcal{L}:= -\nabla\cdot(M(x)\nabla)$ is well-defined and symmetric on $C^\infty_c(\mathbb{R}^n)$. It is known 
(see for instance, \cite{davies1989heat, grigoryan2009heat}) 
that $\mathcal{L}$ extends to be a non-negative,
self-adjoint operator in $L^2(\mathbb{R}^n)$ with the domain
$$\mathrm{Dom}(\mathcal{L})=\{u\in H^1(\mathbb{R}^n): \mathcal{L}u\in L^2(\mathbb{R}^n)\}$$
and $\mathrm{spec}\, \mathcal{L}\subset [0, \infty)$. For $t\geq 0$, we define
$$e^{-t\mathcal{L}}:= \int^\infty_0 e^{-t\lambda }dE_\lambda$$
where ${E_\lambda}$ is the spectral resolution of $\mathcal{L}$.
This is a family of bounded self-adjoint operators on $L^2(\mathbb{R}^n)$. 
It is known (see Theorem 7.13 and Theorem 7.20 in \cite{grigoryan2009heat}) that, there exists a unique symmetric heat kernel $p_t(\cdot, \cdot)$ s.t. $p_t(x, y)$ is $C^\infty$-smooth jointly in
$t>0, x, y\in \mathbb{R}^n$ and
$$(e^{-t\mathcal{L}}f)(x)= \int p_t(x,y)f(y)dy,\qquad x\in \mathbb{R}^n,\, t>0,\,
f\in L^2(\mathbb{R}^n).$$
Moreover, we have the following Gaussian bounds on $p_t(x, y)$
(see Chapter 3 in \cite{davies1989heat})
$$c_1e^{-b_1\frac{|x-y|^2}{t}}t^{-\frac{n}{2}}\leq p_t(x, y)
\leq c_2e^{-b_2\frac{|x-y|^2}{t}}t^{-\frac{n}{2}},\qquad
x, y\in \mathbb{R}^n,\, t>0.$$
Now we define
$$K(x, y):= C\int^\infty_0 p_t(x, y)\frac{dt}{t^{1+s}}.$$
By using the substitution $\alpha= \frac{|x-y|^2}{t}$, we can easily get the estimate
$$ \frac{C_1}{|x-y|^{n+2s}}\leq K(x, y)= K(y, x)\leq \frac{C_2}{|x-y|^{n+2s}},\qquad x\neq y,\quad x,y\in \mathbb{R}^n.$$

\begin{remark}
Note that if $M$ is the identity matrix, then we have
$$p_t(x, y)= \frac{1}{(4\pi t)^{\frac{n}{2}}}e^{-\frac{|x-y|^2}{4t}},
\qquad K(x,y)= \frac{c}{|x-y|^{n+2s}}.$$
\end{remark}

It has been shown in \cite{ghosh2017calderon} that
$$\mathcal{L}^s u(x)= 2\lim_{\epsilon\to 0^+}
\int_{\mathbb{R}^n\setminus B_\epsilon(x)}(u(x)-u(y))K(x,y)\,dy,\qquad
u\in H^s(\mathbb{R}^n)$$
so it is clear from (\ref{LsA}) that $\mathcal{L}^s_A= \mathcal{L}^s$
when $A= 0$.

\section{Fractional Operator $\mathcal{L}^s_A$}

Recall that in Section 1 we gave the formal pointwise definition of $\mathcal{L}^s_A$ in (\ref{LsA}). Now we do a formal computation to motivate the bilinear form definition of $\mathcal{L}^s_A$. For convenience we will write 
\begin{equation}\label{EAxy}
E_A(x,y)= e^{i(x-y)\cdot A(\frac{x+y}{2})}
\end{equation}
when necessary. Note that
\begin{equation}\label{c1}
E_A(x,y)= \overline{E_A(y,x)},\qquad |E_A(x,y)|= 1,\qquad K(x, y)= K(y, x)
\end{equation}
so formally we have
$$2\int(\int_{\mathbb{R}^n\setminus B_\epsilon(x)}(u(x)-E_A(x,y)u(y))K(x,y)\,dy)
\overline{v(x)}\,dx$$
$$=2\iint_{\{|x-y|\geq \epsilon\}}(u(x)-E_A(x,y)u(y))K(x,y)\overline{v(x)}\,dydx$$
$$=\iint_{\{|x-y|\geq \epsilon\}}(u(x)-E_A(x,y)u(y))K(x,y)\overline{v(x)}\,dydx
+ \iint_{\{|x-y|\geq \epsilon\}}(u(y)-\overline{E_A(x,y)}u(x))K(y,x)
\overline{v(y)}\,dxdy$$
$$=\iint_{\{|x-y|\geq \epsilon\}}(u(x)-E_A(x,y)u(y))K(x,y)\overline{v(x)}\,dydx$$
$$- \iint_{\{|x-y|\geq \epsilon\}}(u(x)-E_A(x,y)u(y))K(x,y)\overline{E_A(x,y)}\overline{v(y)}\,dxdy$$
$$= \iint_{\{|x-y|\geq \epsilon\}}(u(x)-E_A(x,y)u(y))(\overline{v(x)-E_A(x,y)v(y)})
K(x, y)\,dxdy.$$
Let $\epsilon\to 0^+$, then formally we have
\begin{equation}\label{slf}
\langle \mathcal{L}^s_Au, \bar{v}\rangle= \iint
(u(x)-e^{i(x-y)\cdot A(\frac{x+y}{2})}u(y))
(\overline{v(x)-e^{i(x-y)\cdot A(\frac{x+y}{2})}v(y)})K(x,y)\,dxdy.
\end{equation}

\begin{define}
We define $\mathcal{L}^s_A$ by the bilinear form
\begin{equation}\label{blf}
\langle \mathcal{L}^s_Au, v\rangle:= \iint
(u(x)-e^{i(x-y)\cdot A(\frac{x+y}{2})}u(y))
(v(x)-e^{-i(x-y)\cdot A(\frac{x+y}{2})}v(y))K(x, y)\,dxdy.   
\end{equation}
\end{define}

It is clear from (\ref{blf}) that 
\begin{equation}\label{ns}
\langle \mathcal{L}^s_Au, v\rangle= \langle \mathcal{L}^s_{-A}v, u\rangle.
\end{equation}

\begin{remark} 
Note that by (\ref{EAxy}), (\ref{c1}) and (\ref{blf}), 
we have 
$$\langle \mathcal{L}^s_Au, v\rangle- \langle \mathcal{L}^s_Av, u\rangle
= \iint(u(x)- E_A(x, y)u(y))(v(x)- \overline{E_A(x, y)}v(y))K(x, y)\,dxdy$$
$$-\iint(v(x)- E_A(x, y)v(y))(u(x)- \overline{E_A(x, y)}u(y))
K(x, y)\,dxdy$$
$$= \iint(E_A(x, y)- \overline{E_A(x, y)})u(x)v(y)K(x, y)\,dxdy
- \iint(E_A(x, y)- \overline{E_A(x, y)})u(y)v(x)K(x, y)\,dxdy$$
$$= \iint(E_A(x, y)- \overline{E_A(x, y)})u(x)v(y)K(x, y)\,dxdy
- \iint(E_A(y, x)- \overline{E_A(y, x)})u(x)v(y)K(y, x)\,dydx$$
$$= 2\iint(E_A(x, y)- \overline{E_A(x, y)})u(x)v(y)K(x, y)\,dxdy$$
so in general 
$$\langle \mathcal{L}^s_Au, v\rangle\neq \langle \mathcal{L}^s_Av, u\rangle.$$ 
\end{remark}

We claim that (\ref{blf}) is a bounded bilinear form on $H^s(\mathbb{R}^n)\times H^s(\mathbb{R}^n)$ for $A\in L^\infty(\mathbb{R}^n)$. To show this, we need to consider the norm $||\cdot||_{H^s_A}$ introduced in \cite{d2018ground, squassina2016bourgain}.

\begin{define}
The magnetic Sobolev norm $||\cdot||_{H^s_A}$ is defined by
$$||u||_{H^s_A}:= (||u||^2_{L^2}+ [u]^2_{H^s_A})^{1/2}$$
where 
\begin{equation}\label{semi}
[u]_{H^s_A}:= (\iint
\frac{|u(x)-e^{i(x-y)\cdot A(\frac{x+y}{2})}u(y)|^2}{|x-y|^{n+2s}}\,dxdy)^{1/2}.
\end{equation}
\end{define}

Clearly, $||\cdot||_{H^s_A}= ||\cdot||_{H^s}$ when $A= 0$. In fact, we can show the equivalence between $||\cdot||_{H^s_A}$ and the classical $H^s$ norm for $A\in L^\infty(\mathbb{R}^n)$. The key estimate we will use is that
$$|e^{i(x-y)\cdot A(\frac{x+y}{2})}- 1|\leq C\max\{1,\,|x-y|\}$$
where $C$ depends on $||A||_{L^\infty}$.
\begin{lemma}
Suppose $0< s< 1$ and $A\in L^\infty(\mathbb{R}^n)$, then $||\cdot||_{H^s_A}\sim ||\cdot||_{H^s}$. 
\end{lemma}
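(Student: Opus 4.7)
The plan is to use the triangle inequality on the integrand of the magnetic Gagliardo seminorm to reduce the problem to controlling the error term $|E_A(x,y)-1|^2|u(y)|^2/|x-y|^{n+2s}$ integrated over $\mathbb{R}^n\times \mathbb{R}^n$, and to show that this error is bounded by $C\|u\|_{L^2}^2$ via the pointwise estimate the author has singled out. Since the $L^2$ norms agree, it then suffices to control the two Gagliardo seminorms by each other.

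Concretely, writing $E_A(x,y)= e^{i(x-y)\cdot A(\frac{x+y}{2})}$ and using
$$u(x)-E_A(x,y)u(y)=\bigl(u(x)-u(y)\bigr)+\bigl(1-E_A(x,y)\bigr)u(y),$$
I would apply $|a+b|^2\leq 2|a|^2+2|b|^2$ to get
$$[u]_{H^s_A}^2\leq 2[u]_{H^s}^2 + 2\iint \frac{|E_A(x,y)-1|^2\,|u(y)|^2}{|x-y|^{n+2s}}\,dxdy,$$
and the symmetric decomposition $u(x)-u(y)=(u(x)-E_A(x,y)u(y))+(E_A(x,y)-1)u(y)$ to get the reverse inequality. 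So the whole game is to show the remainder integral is $\lesssim \|u\|_{L^2}^2$.

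For that remainder, split the domain into $\{|x-y|\leq 1\}$ and $\{|x-y|\geq 1\}$. On the near-diagonal piece, I would use $|e^{it}-1|\leq |t|$ for real $t$, hence $|E_A(x,y)-1|\leq \|A\|_{L^\infty}|x-y|$, giving
$$\iint_{|x-y|\leq 1}\frac{|E_A(x,y)-1|^2|u(y)|^2}{|x-y|^{n+2s}}\,dxdy
\leq \|A\|_{L^\infty}^2\int |u(y)|^2\Bigl(\int_{|z|\leq 1}|z|^{2-n-2s}\,dz\Bigr)dy,$$
where the inner integral is finite because $s<1$. On the far piece, use $|E_A(x,y)-1|\leq 2$ and note that $\int_{|z|\geq 1}|z|^{-n-2s}\,dz<\infty$ because $s>0$. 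Both pieces therefore contribute $C\|u\|_{L^2}^2$ with $C=C(n,s,\|A\|_{L^\infty})$.

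Combining these two estimates gives $[u]_{H^s_A}^2\leq 2[u]_{H^s}^2+C\|u\|_{L^2}^2$ and the analogous bound with the roles reversed, whence $\|u\|_{H^s_A}\sim\|u\|_{H^s}$. There is no real obstacle here; the only subtlety is the range of $s$, and the splitting at $|x-y|=1$ is designed precisely so that $s<1$ ensures integrability at the diagonal and $s>0$ ensures integrability at infinity, both of which are in force by hypothesis.
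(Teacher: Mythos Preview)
Your argument is correct, and the heart of it---the bound
\[
\iint \frac{|E_A(x,y)-1|^2\,|u(y)|^2}{|x-y|^{n+2s}}\,dxdy \leq C\|u\|_{L^2}^2
\]
obtained by splitting at $|x-y|=1$ and using $|E_A-1|\lesssim |x-y|$ near the diagonal and $|E_A-1|\leq 2$ away from it---is exactly what the paper does as well.

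The only difference is in how this remainder estimate is combined with the seminorms. You use the crude inequality $|a+b|^2\leq 2|a|^2+2|b|^2$ to obtain $[u]_{H^s_A}^2\leq 2[u]_{H^s}^2+C\|u\|_{L^2}^2$ and its reverse. The paper instead uses the identity $|a|^2-|b|^2=(a-b)\bar a+(\bar a-\bar b)b$ together with Cauchy--Schwarz to prove the sharper additive statement
\[
\bigl|[u]_{H^s}-[u]_{H^s_A}\bigr|\leq C'\|u\|_{L^2}.
\]
Your route is slightly more elementary and entirely sufficient for the norm equivalence claimed; the paper's route yields the somewhat stronger conclusion that the two seminorms themselves differ by at most a fixed multiple of $\|u\|_{L^2}$, with no multiplicative loss in the leading term.
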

\begin{proof}
We only need to show that
\begin{equation}\label{semine}
|[u]_{H^s}- [u]_{H^s_A}|\leq C'||u||_{L^2}.
\end{equation}
In fact, by using the identity
$$|a|^2- |b|^2= (a- b)\bar{a}+ (\bar{a}- \bar{b})b$$
and (\ref{EAxy}), we have
$$|[u]^2_{H^s}- [u]^2_{H^s_A}|= 
|\iint\frac{|u(x)-u(y)|^2-|u(x)-e^{i(x-y)\cdot A(\frac{x+y}{2})}u(y)|^2}{|x-y|^{n+2s}}\,dxdy|= |I_1+ I_2|$$
where 
$$I_1:= \iint\frac{(E_A(x,y)-1)u(y)
\overline{u(x)-u(y)}}{|x-y|^{n+2s}}\,dxdy$$
$$I_2:= \iint\frac{\overline{(E_A(x, y)-1)u(y)}(u(x)- E_A(x, y)u(y))}{|x-y|^{n+2s}}\,dxdy$$
By Cauchy-Schwarz inequality we have
$$|I_1|\leq (\iint\frac{|(E_A(x,y)-1)u(y)|^2}
{|x-y|^{n+2s}}\,dxdy)^{1/2}
(\iint\frac{|u(x)-u(y)|^2}{|x-y|^{n+2s}}\,dxdy)^{1/2}$$
$$= ((\iint_{\{|x-y|\leq 1\}}+ \iint_{\{|x-y|\geq 1\}})\frac{|(E_A(x,y)-1)u(y)|^2}
{|x-y|^{n+2s}}\,dxdy)^{1/2}[u]_{H^s}$$
$$\leq(\iint_{\{|x-y|\leq 1\}}\frac{C^2|x-y|^2|u(y)|^2}{|x-y|^{n+2s}}\,dxdy
+ \iint_{\{|x-y|\geq 1\}}\frac{C^2|u(y)|^2}{|x-y|^{n+2s}}\,dxdy)^{1/2}[u]_{H^s}$$
$$=
(\int(\int_{\{|x-y|\leq 1\}}\frac{C^2}{|x-y|^{n+2s-2}}\,dx)|u(y)|^2\,dy
+ \int(\int_{\{|x-y|\geq 1\}}\frac{C^2}{|x-y|^{n+2s}}\,dx)|u(y)|^2\,dy)^{1/2}[u]_{H^s}$$
$$\leq C'||u||_{L^2}[u]_{H^s}.$$
Similarly we can show
$$|I_2|\leq C'||u||_{L^2}[u]_{H^s_A}.$$
Hence, we have
$$|[u]^2_{H^s}- [u]^2_{H^s_A}|\leq C'||u||_{L^2}([u]_{H^s}+ [u]_{H^s_A}),$$
which implies (\ref{semine}).
\end{proof}

The boundness of $\mathcal{L}^s_A$ is now an immediate consequence of the lemma above.
\begin{prop}
Suppose $0< s< 1$ and $A\in L^\infty(\mathbb{R}^n)$, then 
$$\mathcal{L}^s_A: H^s(\mathbb{R}^n)\to H^{-s}(\mathbb{R}^n)$$
is linear and bounded.
\end{prop}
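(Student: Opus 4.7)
The plan is to read the boundedness of $\mathcal{L}^s_A$ directly off the bilinear form (\ref{blf}) by applying Cauchy--Schwarz and then identifying the resulting quadratic expressions as magnetic Sobolev seminorms controlled by the preceding lemma. Linearity in the first argument is immediate from (\ref{blf}), since the integrand depends linearly on $u$ (the antilinearity in $v$ gives the correct pairing $H^s \to H^{-s}$ under the standard identification). So only the norm estimate requires work.

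For the estimate, I would first apply the Cauchy--Schwarz inequality to (\ref{blf}), writing $K(x,y)\,dxdy$ as the underlying measure, to obtain
\begin{equation*}
|\langle \mathcal{L}^s_Au, v\rangle|
\leq \left(\iint |u(x)-E_A(x,y)u(y)|^2 K(x,y)\,dxdy\right)^{1/2}
\left(\iint |v(x)-\overline{E_A(x,y)}v(y)|^2 K(x,y)\,dxdy\right)^{1/2}.
\end{equation*}
Next I would use the upper Gaussian bound $K(x,y) \leq C_2 |x-y|^{-(n+2s)}$ established in Subsection 2.2 to dominate each factor by a magnetic Sobolev seminorm of the form (\ref{semi}). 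For the $u$-factor this is exactly $C_2^{1/2}[u]_{H^s_A}$. For the $v$-factor, observing that $\overline{E_A(x,y)} = E_{-A}(x,y)$, I recognize the integrand as $[v]^2_{H^s_{-A}}$ up to the same constant.

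Finally, I would invoke the previous lemma twice: applied to $A$ and to $-A$ (both lie in $L^\infty(\mathbb{R}^n)$ with the same sup norm), it gives $[u]_{H^s_A} \leq \|u\|_{H^s_A} \leq C\|u\|_{H^s}$ and $[v]_{H^s_{-A}} \leq C\|v\|_{H^s}$. Combining these yields
\begin{equation*}
|\langle \mathcal{L}^s_Au, v\rangle| \leq C\|u\|_{H^s}\|v\|_{H^s}
\end{equation*}
for all $u, v \in H^s(\mathbb{R}^n)$, which is exactly the statement that $\mathcal{L}^s_A : H^s(\mathbb{R}^n) \to H^{-s}(\mathbb{R}^n)$ is bounded under the identification $H^{-s} = (H^s)^*$.

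There is no serious obstacle here: the proof is essentially bookkeeping once the lemma on the equivalence of $\|\cdot\|_{H^s_A}$ and $\|\cdot\|_{H^s}$ is in hand. The only small point to remember is that the $v$-side integrand involves $-A$ rather than $A$, but this is harmless since $\|{-A}\|_{L^\infty} = \|A\|_{L^\infty}$, so the lemma applies uniformly.
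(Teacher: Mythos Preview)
Your proof is correct and is essentially the paper's argument: apply Cauchy--Schwarz to the bilinear form, use $K(x,y)\leq C_2|x-y|^{-(n+2s)}$ to recognize the factors as magnetic seminorms, and invoke Lemma 3.3. The only cosmetic difference is that the paper works with the sesquilinear expression (\ref{slf}) (pairing with $\bar v$), so both factors come out as $[\,\cdot\,]_{H^s_A}$ rather than one $[u]_{H^s_A}$ and one $[v]_{H^s_{-A}}$; also note that (\ref{blf}) is genuinely bilinear in $(u,v)$ (there is no conjugation on $v$), so your parenthetical about ``antilinearity in $v$'' should be dropped.
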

\begin{proof}
Since $K(x, y)\sim 1/|x-y|^{n+2s}$, then by (\ref{slf}) and Cauchy-Schwarz inequality we have
$$|\langle \mathcal{L}^s_Au, \bar{v}\rangle|\leq C[u]_{H^s_A}[v]_{H^s_A}\leq
C'||u||_{H^s}||v||_{H^s}= C'||u||_{H^s}||\bar{v}||_{H^s}.$$
\end{proof}

\section{Exterior Dirichlet Problem and DN Map}

From now on we always assume $A\in L^\infty(\mathbb{R}^n)$ and $q\in L^\infty(\Omega)$.
\begin{define}
The sesquilinear form associated with $\mathcal{L}^s_A+ q$ is defined by
\begin{equation}\label{BAq}
B_{A, q}(u, v):= \langle \mathcal{L}^s_Au, \bar{v}\rangle+ \int_\Omega qu\bar{v},\qquad u,v\in H^s(\mathbb{R}^n).
\end{equation}
\end{define}

The boundness of $B_{A,q}$ follows from the the boundness of $\mathcal{L}^s_A$.

\begin{define}
We say $q$ is regular for $\mathcal{L}^s_A$
if $B_{A,q}$ is coercive on $\tilde{H}^s(\Omega)\times \tilde{H}^s(\Omega)$.
\end{define}

(\ref{ns}) implies $q$ is regular for $\mathcal{L}^s_A$ if and only if 
$q$ is regular for $\mathcal{L}^s_{-A}$. Now we give a sufficient condition for $q$ being regular.

\begin{prop}
Suppose $c\leq q\in L^\infty(\Omega)$ for some $c>0$, then $B_{A,q}$ is coercive on $\tilde{H}^s(\Omega)\times \tilde{H}^s(\Omega)$.
\end{prop}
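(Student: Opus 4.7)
The plan is to write $B_{A,q}(u,u)$ as a sum of two manifestly non-negative quantities and then invoke Lemma 3.3 to pass to the classical $H^s$-norm. First I would substitute $v = \bar u$ into the bilinear form definition (\ref{blf}). Using $\bar u(x) = \overline{u(x)}$ and $\overline{E_A(x,y)}\,\bar u(y) = \overline{E_A(x,y)u(y)}$, the second factor collapses into the complex conjugate of the first, giving
\begin{equation*}
\langle \mathcal{L}^s_A u, \bar u\rangle = \iint |u(x) - E_A(x,y)u(y)|^2 K(x,y)\,dxdy \geq 0,
\end{equation*}
so this quantity is real. Combined with the two-sided bound $K(x,y) \sim |x-y|^{-n-2s}$ recorded in Section 2.2 and the definition (\ref{semi}) of $[\,\cdot\,]_{H^s_A}$, I get $\langle \mathcal{L}^s_A u, \bar u\rangle \geq C_1 [u]^2_{H^s_A}$ for some $C_1 > 0$.

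Next I would handle the potential term. Since $u \in \tilde H^s(\Omega) = H^s_{\bar\Omega}(\mathbb{R}^n)$, the support of $u$ lies in $\bar\Omega$, so $\|u\|^2_{L^2(\mathbb{R}^n)} = \|u\|^2_{L^2(\Omega)}$. The hypothesis $q \geq c > 0$ then yields $\int_\Omega q|u|^2 \geq c \|u\|^2_{L^2(\mathbb{R}^n)}$, which is again real and non-negative. Adding the two estimates produces
\begin{equation*}
\mathrm{Re}\,B_{A,q}(u,u) \geq C_1 [u]^2_{H^s_A} + c\|u\|^2_{L^2} \geq \min(C_1, c)\,\|u\|^2_{H^s_A}.
\end{equation*}

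Finally I would apply Lemma 3.3: the norm equivalence $\|\cdot\|_{H^s_A} \sim \|\cdot\|_{H^s}$ gives a constant $c_1 > 0$ with $\|u\|_{H^s_A} \geq c_1 \|u\|_{H^s}$, so
\begin{equation*}
\mathrm{Re}\,B_{A,q}(u,u) \geq c_1^2 \min(C_1, c)\,\|u\|^2_{H^s},
\end{equation*}
which is coercivity on $\tilde H^s(\Omega) \times \tilde H^s(\Omega)$. There is no real obstacle here; the only subtlety worth double-checking is the pairing with $\bar v$ in the definition of $B_{A,q}$ in (\ref{BAq}) versus the raw bilinear form (\ref{blf}) of $\mathcal{L}^s_A$, which is precisely what makes the magnetic quadratic form come out as a genuine squared modulus rather than a product that could fail to be real.
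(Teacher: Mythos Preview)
Your proof is correct and follows exactly the same approach as the paper: you use the kernel bound $K(x,y)\sim |x-y|^{-n-2s}$ together with the sesquilinear identity (\ref{slf}) to get $\langle \mathcal{L}^s_A u,\bar u\rangle \geq C_1[u]^2_{H^s_A}$, the lower bound $q\geq c$ to control the $L^2$ part, and then Lemma~3.3 to convert the $H^s_A$-norm to the standard $H^s$-norm. The paper's proof is a one-line compression of precisely this argument; your added observations (that both terms are real, and that $\|u\|_{L^2(\mathbb{R}^n)}=\|u\|_{L^2(\Omega)}$ for $u\in\tilde H^s(\Omega)$) are correct and simply make explicit what the paper leaves implicit.
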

\begin{proof}
Since $K(x, y)\sim 1/|x-y|^{n+2s}$, then by (\ref{slf}), (\ref{semi}) and Lemma 3.3 we have
$$B_{A, q}(u, u)\geq C[u]^2_{H^s_A}+ c||u||^2_{L^2}\geq C'||u||^2_{H^s}$$
for $u\in \tilde{H}^s(\Omega)$, so the coercivity holds.
\end{proof}

\subsection{Exterior Dirichlet problem}
\begin{define}
We say $u\in H^s(\mathbb{R}^n)$ is a weak solution of the exterior Dirichlet problem
\begin{equation}
\left\{
\begin{aligned}
(\mathcal{L}^s_A+ q)u&= f\quad \text{in}\,\,\Omega\\
u&= g\quad \text{in}\,\,\Omega_e\\
\end{aligned}
\right.
\end{equation}
where $f\in (\tilde{H}^s(\Omega))^*$ and $g\in H^s(\mathbb{R}^n)$ if $u$ satisfies
$u- g\in \tilde{H}^s(\Omega)$ and
$$B_{A, q}(u, \phi)= f(\bar{\phi}),\qquad \phi\in \tilde{H}^s(\Omega).$$
\end{define}

\begin{prop}
Suppose $q$ is regular for $\mathcal{L}^s_A$, then for each $g\in H^s(\mathbb{R}^n)$, the problem
\begin{equation}\label{EDP}
\left\{
\begin{aligned}
(\mathcal{L}^s_A+ q)u&= 0\quad \text{in}\,\,\Omega\\
u&= g\quad \text{in}\,\,\Omega_e\\
\end{aligned}
\right.
\end{equation}
has a unique solution $u_g\in H^s(\mathbb{R}^n)$ and the solution operator $P_{A, q}: g\to u_g$ is bounded on $H^s(\mathbb{R}^n)$.
\end{prop}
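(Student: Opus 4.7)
The plan is to recast the exterior Dirichlet problem as a variational problem with zero exterior data and then apply the Lax--Milgram theorem, using the coercivity built into the hypothesis that $q$ is regular for $\mathcal{L}^s_A$.

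First, I would lift the exterior data inside by writing any candidate solution as $u = v + g$ with $v \in \tilde{H}^s(\Omega)$. By Definition 4.5 (with $f = 0$), $u$ solves (\ref{EDP}) if and only if
$$B_{A,q}(v, \phi) = -B_{A,q}(g, \phi) \quad \text{for every } \phi \in \tilde{H}^s(\Omega).$$
The right-hand side $F(\phi) := -B_{A,q}(g, \phi)$ is an antilinear functional on $\tilde{H}^s(\Omega)$, and by Proposition 3.4 together with (\ref{BAq}) it satisfies $|F(\phi)| \leq C \|g\|_{H^s}\|\phi\|_{H^s}$, so $F \in (\tilde{H}^s(\Omega))^*$ with $\|F\|_{(\tilde{H}^s(\Omega))^*} \leq C \|g\|_{H^s}$.

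Second, since $q$ is regular for $\mathcal{L}^s_A$, the sesquilinear form $B_{A,q}$ is bounded and coercive on $\tilde{H}^s(\Omega) \times \tilde{H}^s(\Omega)$. The (complex) Lax--Milgram theorem therefore produces a unique $v \in \tilde{H}^s(\Omega)$ with $B_{A,q}(v, \phi) = F(\phi)$ for all $\phi \in \tilde{H}^s(\Omega)$, together with the estimate $\|v\|_{H^s} \leq C' \|F\|_{(\tilde{H}^s(\Omega))^*} \leq C'' \|g\|_{H^s}$. Setting $u_g := v + g$ then yields a weak solution of (\ref{EDP}) with $\|u_g\|_{H^s} \leq (C'' + 1)\|g\|_{H^s}$, which is the desired boundedness of $P_{A,q}$. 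For uniqueness, the difference of two solutions lies in $\tilde{H}^s(\Omega)$ and is a weak solution of (\ref{EDP}) with $g = 0$; testing against itself and invoking coercivity forces it to vanish.

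There is no genuine obstacle here; the argument is the standard Lax--Milgram reduction, and all analytic ingredients (boundedness of $\mathcal{L}^s_A$ from Proposition 3.4, the identification $H^s(\mathbb{R}^n)/\tilde{H}^s(\Omega) = H^s(\Omega_e)$, and the coercivity hypothesis) are already in place. The only point requiring care is the antilinearity in the second slot of $B_{A,q}$, which is handled either by appealing directly to the sesquilinear form of Lax--Milgram or, equivalently, by working with the real part of $B_{A,q}$, whose coercivity on $\tilde{H}^s(\Omega)$ is exactly what regularity of $q$ provides.
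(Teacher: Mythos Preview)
Your proof is correct and follows essentially the same route as the paper: reduce to zero exterior data by writing $u = v + g$ with $v \in \tilde{H}^s(\Omega)$, then apply Lax--Milgram using the coercivity of $B_{A,q}$ on $\tilde{H}^s(\Omega)\times\tilde{H}^s(\Omega)$ and the boundedness from Proposition~3.4. The only discrepancy is a numbering slip---the weak solution is Definition~4.4, not~4.5---and your treatment of the sesquilinearity is slightly more explicit than the paper's, but the argument is the same.
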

\begin{proof}
By Lax-Milgram Theorem, there exists an invertible bounded linear map $f\to w_f$
from $(\tilde{H}^s(\Omega))^*$ to $\tilde{H}^s(\Omega)$ s.t. $w_f$ satisfies
$$B_{A, q}(w, \phi)= f(\bar{\phi}),\qquad \phi\in \tilde{H}^s(\Omega).$$
For any fixed $g\in H^s(\mathbb{R}^n)$, let $f= -(\mathcal{L}^s_A+ q)g$, then $u_g:= w_f+g$ is the unique weak solution of (\ref{EDP}) and the boundness of $P_{A, q}$ on $H^s(\mathbb{R}^n)$ is clear.
\end{proof}

\subsection{DN map}
From now on we always assume $q$ is regular for $\mathcal{L}^s_A$.

Let $X:= H^s(\mathbb{R}^n)/\tilde{H}^s(\Omega)= H^s(\Omega_e)$ 
and $\tilde{g}:=$ the natural image of 
$g\in H^s(\mathbb{R}^n)$ in $X$.

\begin{define}
We define the Dirichlet-to-Neumann map $\Lambda_{A, q}$ by 
\begin{equation}\label{DN}
\langle \Lambda_{A, q}\tilde{g}, \tilde{h}\rangle:= B_{A, q}(u_g, \bar{h}),
\qquad g,h\in H^s(\mathbb{R}^n)
\end{equation}
where $u_g= P_{A,q}g$. 
\end{define}

Note that if $g_2- g_1\in \tilde{H}^s(\Omega)$ and $h_2- h_1\in \tilde{H}^s(\Omega)$,
then $u_{g_1}= u_{g_2}$ and
$$B_{A, q}(u_{g_2}, \bar{h}_2)- B_{A, q}(u_{g_1}, \bar{h}_1)=
B_{A, q}(u_{g_2}-u_{g_1}, \bar{h}_2)+ B_{A, q}(u_{g_1}, \overline{h_2-h_1})= 0$$
so $\Lambda_{A, q}$ is well-defined.
If $g, h$ belong to the orthogonal complement of $\tilde{H}^s(\Omega)$ in $H^s(\mathbb{R}^n)$, then
$$|\langle \Lambda_{A, q}\tilde{g}, \tilde{h}\rangle|\leq
C||u_g||_{H^s}||h||_{H^s}
\leq C'||g||_{H^s}||h||_{H^s}= C'||\tilde{g}||_X||\tilde{h}||_X$$
so $\Lambda_{A, q}: X\to X^*$ is bounded.

For convenience, we just write 
$\Lambda_{A, q}g$ and $\langle \Lambda_{A, q}g, h\rangle$
instead of
$\Lambda_{A, q}\tilde{g}$ and 
$\langle \Lambda_{A, q}\tilde{g}, \tilde{h}\rangle$.

\begin{remark}
Roughly speaking, $\Lambda_{A, q}g= \mathcal{L}^s_Au_g|_{\Omega_e}$ since
we can formally do the computation
$$\langle \Lambda_{A, q}g, h\rangle=
\int (\mathcal{L}^s_Au_g)h+ \int_\Omega qu_gh$$
$$=(\int_{\Omega_e}+ \int_\Omega) (\mathcal{L}^s_Au_g)h+ \int_\Omega qu_gh
=\int_{\Omega_e} (\mathcal{L}^s_Au_g)h.$$
\end{remark}

The following integral identity will be used in Section 5 to prove the main theorem.
\begin{prop}
Suppose $q_j$ are regular for $\mathcal{L}^s_A$
($j= 1, 2$). For $g_1, g_2\in H^s(\mathbb{R}^n)$,
let $u^+_1:= P_{A, q_1}(g_1)$ and $u^-_2:= P_{-A, q_2}(g_2)$,
i.e.
$u^+_1$ is the unique weak solution of
\begin{equation}
\left\{
\begin{aligned}
(\mathcal{L}^s_{A}+ q_1)u&= 0\quad \text{in}\,\,\Omega\\
u&= g_1\quad \text{in}\,\,\Omega_e\\
\end{aligned}
\right.
\end{equation}
and $u^-_2$ is the unique weak solution of
\begin{equation}
\left\{
\begin{aligned}
(\mathcal{L}^s_{-A}+ q_2)u&= 0\quad \text{in}\,\,\Omega\\
u&= g_2\quad \text{in}\,\,\Omega_e,\\
\end{aligned}
\right.
\end{equation}
then we have
\begin{equation}\label{Ii}
\langle (\Lambda_{A, q_1}-\Lambda_{A, q_2})g_1, g_2\rangle=
\int_\Omega(q_1-q_2)u^+_1u^-_2.    
\end{equation}
\end{prop}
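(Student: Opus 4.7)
The plan is to reduce both $\langle \Lambda_{A,q_1}g_1,g_2\rangle$ and $\langle \Lambda_{A,q_2}g_1,g_2\rangle$ to a common expression involving only $u^+_1$, $u^-_2$ and terms that are independent of $j$, so that their difference collapses to $\int_\Omega(q_1-q_2)u^+_1 u^-_2$. For $j=1,2$, set $u_j:=P_{A,q_j}(g_1)$ (so $u_1=u^+_1$). By Definition 4.5,
$$\langle \Lambda_{A,q_j}g_1,g_2\rangle = B_{A,q_j}(u_j,\bar g_2).$$
Since $u^-_2-g_2\in \tilde{H}^s(\Omega)$ and $\tilde{H}^s(\Omega)$ is closed under conjugation, the function $\overline{g_2-u^-_2}$ lies in $\tilde{H}^s(\Omega)$; testing the weak equation for $u_j$ against it gives $B_{A,q_j}(u_j,\bar g_2-\overline{u^-_2})=0$, whence by (\ref{BAq})
$$\langle \Lambda_{A,q_j}g_1,g_2\rangle = \langle \mathcal{L}^s_A u_j,u^-_2\rangle + \int_\Omega q_j\, u_j\, u^-_2.$$

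Next, I would use the transpose identity (\ref{ns}) to rewrite $\langle \mathcal{L}^s_A u_j,u^-_2\rangle=\langle \mathcal{L}^s_{-A} u^-_2,u_j\rangle$. Since $u_j-g_1\in \tilde H^s(\Omega)$, its conjugate is also in $\tilde H^s(\Omega)$, and testing the weak equation for $u^-_2$ (which solves $(\mathcal{L}^s_{-A}+q_2)u=0$ in $\Omega$) against $\overline{u_j-g_1}$ yields
$$\langle \mathcal{L}^s_{-A} u^-_2, u_j\rangle = \langle \mathcal{L}^s_{-A} u^-_2, g_1\rangle + \int_\Omega q_2\, u^-_2\, g_1 - \int_\Omega q_2\, u^-_2\, u_j.$$
Substituting back, the $u_j$-bilinear piece becomes
$$\langle \Lambda_{A,q_j}g_1,g_2\rangle = \langle \mathcal{L}^s_{-A} u^-_2, g_1\rangle + \int_\Omega q_2\, u^-_2\, g_1 + \int_\Omega (q_j-q_2)\, u_j\, u^-_2.$$

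The first two terms on the right are independent of $j$, so subtracting the $j=1$ and $j=2$ identities eliminates them. For $j=2$ the remaining integral vanishes pointwise, and for $j=1$ it is exactly $\int_\Omega(q_1-q_2)u^+_1u^-_2$, which gives (\ref{Ii}). I do not anticipate any serious obstacle, only a bookkeeping one: since $B_{A,q}$ is sesquilinear (conjugate-linear in the second slot) while the target identity has no conjugates, each application of $B$ must pair $u_j$ or $u^-_2$ with a \emph{conjugated} test function drawn from $\tilde H^s(\Omega)$. Verifying at each step that the intended test functions (e.g.\ $\overline{g_2-u^-_2}$ and $\overline{u_j-g_1}$) genuinely lie in $\tilde H^s(\Omega)$, and applying (\ref{ns}) with the correct sign of $A$, are the only points requiring care.
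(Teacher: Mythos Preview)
Your argument is correct. Both your proof and the paper's rely on the same two ingredients---the transpose identity $\langle \mathcal{L}^s_A u,v\rangle=\langle \mathcal{L}^s_{-A}v,u\rangle$ from (\ref{ns}) and the fact that the weak equations allow one to replace $\bar g_2$ by $\overline{u^-_2}$ (resp.\ $\bar g_1$ by $\overline{u^+_1}$) in the second slot of $B$---but they are organized differently. The paper first isolates the adjoint relation $\langle \Lambda_{A,q}g,h\rangle=\langle \Lambda_{-A,q}h,g\rangle$ and then writes the difference directly as $B_{A,q_1}(u^+_1,\overline{u^-_2})-B_{-A,q_2}(u^-_2,\overline{u^+_1})$, which collapses in one line via (\ref{ns}). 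You instead introduce the auxiliary solution $u_2=P_{A,q_2}(g_1)$ and reduce both pairings $\langle \Lambda_{A,q_j}g_1,g_2\rangle$ to the same $j$-independent expression plus the correction $\int_\Omega(q_j-q_2)u_j u^-_2$. The paper's route is slightly shorter and yields the adjoint identity as a reusable byproduct; yours avoids naming that identity but requires carrying the extra function $u_2$ through the computation.
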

\begin{proof}
By (\ref{ns}), (\ref{BAq}) and (\ref{DN}), we have
$$\langle \Lambda_{A, q}g, h\rangle= B_{A, q}(P_{A,q}g, \overline{P_{-A,q}h})
= B_{-A, q}(P_{-A,q}h, \overline{P_{A,q}g})= 
\langle \Lambda_{-A, q}h, g\rangle.$$
Thus we have
$$\langle (\Lambda_{A, q_1}-\Lambda_{A, q_2})g_1, g_2\rangle=
\langle \Lambda_{A, q_1}g_1, g_2\rangle
-\langle \Lambda_{-A, q_2}g_2, g_1\rangle=
B_{A, q_1}(u^+_1, \overline{u^-_2})- B_{-A, q_2}(u^-_2, \overline{u^+_1})$$
$$=\langle \mathcal{L}^s_Au^+_1, u^-_2\rangle- 
\langle \mathcal{L}^s_{-A}u^-_2, u^+_1\rangle+ \int_\Omega(q_1-q_2)u^+_1u^-_2
= \int_\Omega(q_1-q_2)u^+_1u^-_2.$$
\end{proof}

\section{Proof of the Main Theorem}
The proof of Theorem 1.1 relies on the Runge approximation property of $\mathcal{L}^s_A+ q$, which is based on the following strong uniqueness property.

\begin{prop}
(Theorem 1.2 in \cite{ghosh2017calderon}) 
Suppose $0< s< 1$ and $u\in H^s(\mathbb{R}^n)$.
If both $u$ and $\mathcal{L}^s u$ vanish in a nonempty open set $W$, 
then $u= 0$ in $\mathbb{R}^n$.
\end{prop}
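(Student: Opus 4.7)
My plan is to reduce this nonlocal statement to a local unique continuation problem for a degenerate elliptic equation, by means of an extension of Caffarelli--Silvestre type adapted to the divergence-form operator $\mathcal{L} = -\nabla\cdot(M\nabla)$.

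First I would construct the extension: find $U \in H^1(\mathbb{R}^{n+1}_+, y^{1-2s}\,dxdy)$ satisfying
$$\mathrm{div}_{x,y}\bigl(y^{1-2s}\mathbf{M}(x)\nabla_{x,y}U\bigr) = 0 \quad \text{in } \mathbb{R}^{n+1}_+,$$
$$U(x,0) = u(x), \qquad -c_s\lim_{y\to 0^+} y^{1-2s}\partial_y U(x,y) = \mathcal{L}^s u(x),$$
where $\mathbf{M}(x) = \mathrm{diag}(M(x), 1)$. Existence and uniqueness of $U$ follow from Lax--Milgram on the Muckenhoupt-weighted Sobolev space; the identification of the conormal trace with $\mathcal{L}^s u$ is the analogue of the classical Caffarelli--Silvestre identity and can be verified using the spectral resolution of $\mathcal{L}$ together with the heat-kernel bounds recalled in Section 2.2 and the subordination identity defining $K(x,y)$. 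The hypothesis that both $u$ and $\mathcal{L}^s u$ vanish on $W$ then translates to $U$ having zero Dirichlet data and zero conormal data along $W \times \{0\}$.

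Next, I would exploit these vanishing Cauchy data by reflecting $U$ evenly across the hyperplane $\{y = 0\}$ over $W$, producing a solution of the (now symmetric) degenerate elliptic equation in a full neighborhood of any interior point $(x_0, 0) \in W \times \{0\}$ that vanishes to infinite order at $(x_0, 0)$. I would then apply a Carleman estimate for the operator $\mathrm{div}(|y|^{1-2s}\mathbf{M}\nabla \cdot)$ with a suitably chosen radial weight compatible with the $A_2$-weighted structure, to force $U \equiv 0$ on a small ball centered at $(x_0, 0)$. Finally, I would iterate a weak unique continuation principle for $A_2$-weighted divergence-form operators (in the spirit of Fabes--Kenig--Serapioni) to propagate this local vanishing and obtain $U \equiv 0$ on all of $\mathbb{R}^{n+1}_+$; restricting to $y = 0$ then yields $u \equiv 0$.

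The main obstacle is the Carleman estimate: the factor $y^{1-2s}$ is singular when $s > 1/2$ and degenerate when $s < 1/2$, so classical Carleman inequalities do not apply directly. One must choose the Carleman weight carefully enough that both the bulk and boundary contributions have the correct sign on the weighted space, and one must track how the smooth variable coefficients $a_{i,j}(x)$ interact with the degenerate normal direction, which is the delicate technical core of the argument.
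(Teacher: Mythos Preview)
The paper does not prove this proposition at all: it is quoted verbatim as Theorem~1.2 of \cite{ghosh2017calderon} and used as a black box. Your outline, by contrast, sketches the actual argument from that reference---the Caffarelli--Silvestre type extension for $\mathcal{L}^s$, reduction to vanishing Cauchy data for a degenerate elliptic equation in $\mathbb{R}^{n+1}_+$, and a Carleman estimate in the $A_2$-weighted setting to obtain strong unique continuation. That is indeed the correct strategy, and it is precisely what the cited paper carries out; so your proposal is not wrong, but it goes well beyond what the present paper requires, which is simply to invoke the result.

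One small comment on the sketch itself: the even reflection step as you describe it is slightly imprecise. With both Dirichlet and weighted Neumann data vanishing on $W\times\{0\}$, one typically performs an \emph{odd} reflection (so that the extended function solves the equation across $\{y=0\}$ with the $|y|^{1-2s}$ weight) rather than an even one; even reflection is what one uses when only the Neumann condition vanishes. This does not affect the overall strategy, but it is worth getting right if you intend to fill in the details.
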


The next lemma is the bridge from the strong uniqueness property of  $\mathcal{L}^s$ to the Runge approximation property of $\mathcal{L}^s_A+ q$.

\begin{lemma}
Suppose $\Omega\cup \mathrm{supp}\,A\subset B_r(0)$ for some $r> 0$, $W$ is a nonempty open set s.t. $W\cap B_{3r}(0)= \emptyset$, then we have
$$\mathcal{L}^s u|_W= \mathcal{L}^s_Au|_W,\qquad u\in \tilde{H}^s(\Omega).$$
\end{lemma}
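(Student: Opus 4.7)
The plan is to interpret $\mathcal{L}^s u|_W = \mathcal{L}^s_A u|_W$ in the distributional sense, i.e., to verify that $\langle \mathcal{L}^s_A u, \phi\rangle = \langle \mathcal{L}^s u, \phi\rangle$ for every $\phi \in C_c^\infty(W)$, using the bilinear form (\ref{blf}) with magnetic potential $A$ and with $A \equiv 0$ respectively.

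The decisive input is a geometric observation about the midpoint $\frac{x+y}{2}$ appearing inside $E_A(x,y)$. If $x \in W$ and $y \in \bar\Omega$, then $|x| \geq 3r$ (from $W \cap B_{3r}(0) = \emptyset$) and $|y| \leq r$ (from $\bar\Omega \subset \overline{B_r(0)}$) give $|x+y| \geq |x| - |y| \geq 2r$, so $\bigl|\tfrac{x+y}{2}\bigr| \geq r$. Hence $\tfrac{x+y}{2} \notin B_r(0) \supset \mathrm{supp}\, A$, which forces $A\bigl(\tfrac{x+y}{2}\bigr) = 0$ and therefore $E_A(x,y) = 1$ by (\ref{EAxy}). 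By the symmetry $K(x,y)=K(y,x)$ and $E_A(x,y)=\overline{E_A(y,x)}$, the same conclusion holds when $x \in \bar\Omega$ and $y \in W$.

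Next I would examine the integrand in
\[
\langle (\mathcal{L}^s_A - \mathcal{L}^s) u, \phi\rangle = \iint \Bigl[(u(x) - E_A(x,y)u(y))(\phi(x) - \overline{E_A(x,y)}\phi(y)) - (u(x)-u(y))(\phi(x)-\phi(y))\Bigr] K(x,y)\,dx\,dy.
\]
Because $\mathrm{supp}\, u \subset \bar\Omega \subset B_r(0)$ is disjoint from $\mathrm{supp}\,\phi \subset W \subset \mathbb{R}^n \setminus \overline{B_{3r}(0)}$, the only values of $(x,y)$ where the integrand need not vanish trivially are the two ``mixed'' regions $\bar\Omega \times \mathrm{supp}\,\phi$ and $\mathrm{supp}\,\phi \times \bar\Omega$; elsewhere either $u(x)$ and $u(y)$ both vanish, making both products trivially equal, or $\phi(x)$ and $\phi(y)$ both vanish, likewise. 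On each mixed region exactly one of $u(x), u(y)$ and one of $\phi(x), \phi(y)$ survive, so each square bracket collapses to a single term of the form $\pm u(\cdot)\phi(\cdot)$, with the magnetic version carrying an additional factor of $E_A$ or $\overline{E_A}$. The midpoint observation forces that factor to equal $1$ throughout both mixed regions, so the two integrands agree pointwise and the difference vanishes.

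I do not anticipate a real obstacle: the argument is essentially support-tracking combined with the midpoint identity. The only care required is the bookkeeping of which factor ($E_A$ or $\overline{E_A}$) appears in which mixed region, but both equal $1$ under the given hypotheses. It is worth noting that the numerical gap $3r$ in the assumption $W \cap B_{3r}(0) = \emptyset$ is precisely what is needed to guarantee $\bigl|\tfrac{x+y}{2}\bigr| \geq r$ for all relevant pairs $(x,y)$; any smaller gap would leave midpoints potentially inside $\mathrm{supp}\, A$ and invalidate the reduction.
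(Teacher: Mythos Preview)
Your proposal is correct and follows essentially the same approach as the paper: both test against $\phi\in C_c^\infty(W)$, use the bilinear form (\ref{blf}), and rely on the same midpoint observation $\bigl|\tfrac{x+y}{2}\bigr|\geq r$ to kill the magnetic phase. The only cosmetic difference is that the paper first simplifies the difference of integrands algebraically to the single cross-term $2(E_A(x,y)-1)u(y)v(x)K(x,y)$ (equation (\ref{zeroint})) before invoking the support argument, whereas you partition directly into mixed regions and check agreement there; the content is the same.
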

\begin{proof}
Let $u\in C^\infty_c(\Omega)$ and $v\in C^\infty_c(W)$.
By (\ref{EAxy}), (\ref{c1}) and (\ref{blf}), we have
$$\langle (\mathcal{L}^s- \mathcal{L}^s_A)u, v\rangle$$
$$= \iint[(u(x)- u(y))(v(x)- v(y))-
(u(x)- E_A(x, y)u(y))(v(x)- \overline{E_A(x, y)}v(y))]K(x, y)\,dxdy$$
$$= \iint(E_A(x,y)-1)u(y)v(x)K(x,y)\,dxdy+
\iint(\overline{E_A(x,y)}-1)u(x)v(y)K(x,y)\,dxdy$$
$$= \iint(E_A(x,y)-1)u(y)v(x)K(x,y)\,dxdy+
\iint(\overline{E_A(y,x)}-1)u(y)v(x)K(y,x)\,dydx$$
\begin{equation}\label{zeroint}
= 2\iint(E_A(x,y)-1)u(y)v(x)K(x,y)\,dxdy.
\end{equation}
Note that if $x\notin W$, then $v(x)= 0$; if $y\notin \Omega$, then $u(y)= 0$; if $x\in W$ and $y\in \Omega$, then 
$$|\frac{x+y}{2}|\geq \frac{|x|-|y|}{2}\geq \frac{3r-r}{2}= r,$$
which implies $E_A(x, y)= 1$ in this case.
Hence the integrand in (\ref{zeroint}) is always zero.
\end{proof}

\begin{cor}
Suppose $\Omega\cup \mathrm{supp}\,A\subset B_r(0)$ for some $r> 0$, $W$ is an open set s.t. $W\setminus \overline{B_{3r}}(0)\neq \emptyset$. If
$$u\in \tilde{H}^s(\Omega),\qquad \mathcal{L}^s_{A}u|_W= 0$$
then $u= 0$ in $\mathbb{R}^n$.
\end{cor}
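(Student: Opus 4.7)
The strategy is to reduce this corollary directly to the strong uniqueness property of the unmagnetic operator $\mathcal{L}^s$ (Proposition 5.1) by exploiting the bridging Lemma 5.2. The geometric hypothesis $W\setminus \overline{B_{3r}}(0)\neq \emptyset$ is exactly what lets us do this, since Lemma 5.2 needs the test region to be separated from $B_{3r}(0)$.

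First, I would use openness of $W$ and the hypothesis $W\setminus \overline{B_{3r}}(0)\neq \emptyset$ to select a nonempty open subset $W'\subset W$ with $W'\cap \overline{B_{3r}}(0)=\emptyset$ (e.g., take a small open ball around any point of $W\setminus \overline{B_{3r}}(0)$). This $W'$ satisfies the hypothesis of Lemma 5.2, so applying the lemma to our fixed $u\in \tilde{H}^s(\Omega)$ gives
$$\mathcal{L}^s u|_{W'}= \mathcal{L}^s_A u|_{W'}= 0,$$
where the second equality uses the assumption $\mathcal{L}^s_A u|_W = 0$ together with $W'\subset W$.

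Next, I would observe that $u$ itself vanishes on $W'$: since $u\in \tilde{H}^s(\Omega)=H^s_{\overline{\Omega}}(\mathbb{R}^n)$, we have $\mathrm{supp}\,u\subset \overline{\Omega}\subset \overline{B_r(0)}$, while $W'\cap \overline{B_r(0)}\subset W'\cap \overline{B_{3r}}(0)=\emptyset$. Hence both $u$ and $\mathcal{L}^s u$ vanish on the nonempty open set $W'$, and Proposition 5.1 (the strong uniqueness property for $\mathcal{L}^s$ from \cite{ghosh2017calderon}) immediately yields $u\equiv 0$ on $\mathbb{R}^n$.

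The argument is essentially a bookkeeping step: the only real content is Lemma 5.2, which is already established. The only point that requires a little care is verifying that Lemma 5.2 applies with $W'$ in place of $W$ (it does, since its hypothesis $W'\cap B_{3r}(0)=\emptyset$ follows from our choice); beyond that, no further estimates or approximation arguments are needed. There is no serious obstacle here, which is consistent with the corollary being the clean packaging of the previous lemma together with the known strong uniqueness result.
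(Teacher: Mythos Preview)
Your proof is correct and follows essentially the same approach as the paper: the paper applies Lemma 5.2 and Proposition 5.1 on the open set $W\setminus \overline{B_{3r}}(0)$, while you do the same on a small open ball $W'$ contained in it, which is an immaterial difference. The key observations---that Lemma 5.2 transfers $\mathcal{L}^s_A u|_{W'}=0$ to $\mathcal{L}^s u|_{W'}=0$, and that $u\in\tilde{H}^s(\Omega)$ already vanishes on $W'$---are exactly those used in the paper's one-line proof.
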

\begin{proof}
By Lemma 5.2, 
$u= \mathcal{L}^s u= 0$ in $W\setminus \overline{B_{3r}}(0)$
so $u= 0$ in $\mathbb{R}^n$ by Proposition 5.1.
\end{proof}

Now we can prove the Runge approximation property of $\mathcal{L}^s_A+ q$.

\begin{prop}
Suppose $\Omega\cup \mathrm{supp}\,A\subset B_r(0)$ for some $r> 0$, $W$ is an open set s.t. $W\subset \Omega_e$ and $W\setminus \overline{B_{3r}}(0)\neq \emptyset$, then
$$S:= \{P_{A, q}f|_\Omega: f\in C^\infty_c(W)\}$$
is dense in $L^2(\Omega)$ where $P_{A, q}$ is the solution operator defined in Subsection 4.1.
\end{prop}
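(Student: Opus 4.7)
The natural approach is a Hahn--Banach duality argument: assume for contradiction that $S$ is not dense in $L^2(\Omega)$, pick $v\in L^2(\Omega)$ with $v\neq 0$ annihilating $S$, and show $v=0$ by using Corollary 5.3 on a carefully chosen adjoint solution. Concretely, since $\tilde H^s(\Omega)\hookrightarrow L^2(\Omega)$, by Hahn--Banach it suffices to produce a nonzero $v\in L^2(\Omega)$ satisfying $\int_\Omega v\,(P_{A,q}g)\,dx=0$ for every $g\in C_c^\infty(W)$ and then derive a contradiction.

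Next I would introduce the adjoint solution. Since $q$ is regular for $\mathcal L^s_A$, identity (\ref{ns}) shows it is also regular for $\mathcal L^s_{-A}$, so Lax--Milgram (applied exactly as in Proposition 4.3's proof) produces a unique $\phi\in\tilde H^s(\Omega)$ with
\begin{equation*}
B_{-A,q}(\phi,\bar\psi)=\int_\Omega v\psi\,dx,\qquad \psi\in\tilde H^s(\Omega).
\end{equation*}
Note $\bar\phi\in\tilde H^s(\Omega)$ as well. The key computation is then to test this identity with $\psi=u_g-g\in \tilde H^s(\Omega)$, where $u_g=P_{A,q}g$ and $g\in C_c^\infty(W)\subset C_c^\infty(\Omega_e)$ vanishes on $\Omega$. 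Using the orthogonality assumption and $g|_\Omega=0$ on the left, and expanding the right side via the symmetry (\ref{ns}) together with the weak equation $B_{A,q}(u_g,\bar\phi)=0$, the contributions involving $u_g$ and the $q$-term cancel; what remains is
\begin{equation*}
0=-\langle \mathcal L^s_A g,\phi\rangle=-\langle \mathcal L^s_{-A}\phi,g\rangle\qquad \forall\, g\in C_c^\infty(W),
\end{equation*}
i.e.\ $\mathcal L^s_{-A}\phi|_W=0$.

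Now I invoke the results of this section. Since $\mathrm{supp}(-A)=\mathrm{supp}\,A\subset B_r(0)$ and $W\setminus\overline{B_{3r}}(0)\neq\emptyset$, Corollary 5.3 (applied to $-A$ in place of $A$) forces $\phi\equiv 0$ in $\mathbb R^n$. Plugging $\phi=0$ back into the defining equation for $\phi$ gives $\int_\Omega v\psi\,dx=0$ for every $\psi\in\tilde H^s(\Omega)$, and since $C_c^\infty(\Omega)\subset\tilde H^s(\Omega)$ is dense in $L^2(\Omega)$ this yields $v=0$, the desired contradiction.

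The main obstacle I anticipate is the bookkeeping of the sesquilinear/bilinear pairings: one must be careful that (\ref{ns}) moves $A$ to $-A$ rather than preserving the sign, which is precisely why the adjoint problem lives on $\mathcal L^s_{-A}$ and why Corollary 5.3 must be applied with $-A$. Everything else (existence of $\phi$, vanishing of boundary contributions, density of $C_c^\infty(\Omega)$ in $L^2(\Omega)$) is routine once the symmetry is handled correctly.
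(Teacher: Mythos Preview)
Your proposal is correct and follows essentially the same Hahn--Banach/adjoint-solution argument as the paper: introduce $\phi\in\tilde H^s(\Omega)$ solving $(\mathcal L^s_{-A}+q)\phi=v$, test against $P_{A,q}g-g$, use the symmetry (\ref{ns}) and the weak equation to reduce to $\langle \mathcal L^s_{-A}\phi,g\rangle=0$ for all $g\in C_c^\infty(W)$, then apply Corollary~5.3 with $-A$. The only slip is cosmetic---the ``orthogonality assumption and $g|_\Omega=0$'' act on the \emph{right} side $\int_\Omega v\psi$ of your displayed identity, not the left---but the logic is unaffected.
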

\begin{proof}
By the Hahn-Banach Theorem, it suffices to show that:

If $v\in L^2(\Omega)$ and $\int_\Omega vw= 0$ for all $w\in S$, then $v= 0$ in $\Omega$.

In fact, for any given $v\in L^2(\Omega)$, 
let $\phi$ be the unique weak solution of
\begin{equation}
\left\{
\begin{aligned}
(\mathcal{L}^s_{-A}+ q)\phi&= v\quad \text{in}\,\,\Omega\\
\phi&= 0\quad \text{in}\,\,\Omega_e,\\
\end{aligned}
\right.
\end{equation}
then for any $f\in C^\infty_c(W)$, we have
$$\int_\Omega vP_{A,q}f= \langle v, P_{A,q}f-f\rangle= 
\langle (\mathcal{L}^s_{-A}+ q)\phi, P_{A,q}f-f\rangle= 
\langle (\mathcal{L}^s_A+ q)(P_{A,q}f-f), \phi\rangle$$
since $P_{A,q}f- f\in \tilde{H}^s(\Omega)$. Also note that 
$$\langle (\mathcal{L}^s_A+ q)P_{A,q}f, \phi\rangle= 0$$
since $P_{A,q}$ is the solution operator and $\phi\in \tilde{H}^s(\Omega)$, so we have 
$$\int_\Omega vP_{A,q}f= -\langle (\mathcal{L}^s_A+ q)f, \phi\rangle=
-\langle \mathcal{L}^s_Af, \phi\rangle= -\langle \mathcal{L}^s_{-A}\phi, f\rangle.$$
Hence, if $v\in L^2(\Omega)$ and $\int_\Omega vw= 0$ for all $w\in S$, then
the corresponding $\phi$ satisfies
$$\phi\in \tilde{H}^s(\Omega),\qquad \mathcal{L}^s_{-A}\phi|_W= 0.$$
This implies $\phi= 0$ in $\mathbb{R}^n$ by Corollary 5.3 and thus $v= 0$ in $\Omega$.
\end{proof}

Now we are ready to prove Theorem 1.1.

\begin{proof}
For any fixed $\epsilon >0$ and $f\in L^2(\Omega)$, by Proposition 5.4 we can choose
$u^+_1= P_{A, q_1}(g_1)$ for some $g_1\in C^\infty_c(W_1)$ s.t.
$$||u^+_1-f||_{L^2(\Omega)}\leq \epsilon$$
and for this chosen $u^+_1$, we can choose $u^-_2= P_{-A, q_2}(g_2)$ 
for some $g_2\in C^\infty_c(W_2)$ s.t.
$$||u^+_1||_{L^2(\Omega)}||u^-_2-1||_{L^2(\Omega)}\leq \epsilon.$$
Now by (\ref{IDN}) and (\ref{Ii}), we have
$$\int_\Omega(q_1-q_2)u^+_1u^-_2= 0$$
so 
$$|\int_\Omega(q_1-q_2)f|= |\int_\Omega(q_1-q_2)(f-u^+_1)+ \int_\Omega(q_1-q_2)u^+_1(1-u^-_2)|
\leq C\epsilon.$$
Let $\epsilon\to 0^+$, then we have
$$\int_\Omega(q_1-q_2)f= 0.$$
Since $f\in L^2(\Omega)$ is arbitrary, then we can conclude that $q_1= q_2$.
\end{proof}

\bibliographystyle{plain}
\bibliography{Reference}
\end{document}